\newtheorem{theorem}{Theorem}[section]
\theoremstyle{plain}
\newtheorem{lemma}[theorem]{Lemma}
\newtheorem{corollary}[theorem]{Corollary}
\newtheorem{question}[theorem]{Question}
\theoremstyle{definition}
\newtheorem{definition}[theorem]{Definition}
\begin{document}

%------------------------------------------------------------------

\title{Partite Tur\'an-densities for complete $r-$uniform hypergraphs on $r+1$ vertices}

\author{Klas Markstr\"om\thanks{Department of Mathematics and Mathematical Statistics, Ume\aa\  University , SE-901 87  Ume\aa, Sweden}  and
 Carsten Thomassen\thanks{Department of Applied Mathematics and Computer Science,
Technical University of Denmark, DK-2800 Lyngby, Denmark}}

\maketitle

%------------------------------------------------------------------

\begin{abstract}
	In this paper we investigate density conditions for finding a complete $r$-uniform hypergraph $K_{r+1}^{(r)}$ on $r+1$ vertices in an $(r+1)$-partite $r$-uniform 
	hypergraph $G$.  First we prove an optimal condition in terms of the densities of the $(r+1)$ induced $r$-partite subgraphs of $G$. Second,  
	we prove a version of this result where we assume that $r$-tuples of vertices in $G$ have their neighbours evenly distributed in $G$. Third, we  also prove 
	a counting result for the minimum number of copies of  $K_{r+1}^{(r)}$ when $G$ satisfies our density bound, and present some open problems.
	
	A striking difference between the graph, $r=2$, and the hypergraph, $ r \geq 3 $, cases is that in the first case both the existence threshold and the 
	counting function are non-linear in the involved densities, whereas for hypergraphs they are given by a linear function. Also, the smallest density of the $r$-partite parts needed to 
	ensure the existence of a complete $r$-graph with $(r+1)$ vertices is equal to the golden ratio $\tau=0.618\ldots$  for $r=2$, while it is $\frac{r}{r+1}$for $r\geq3$.
\end{abstract}

%------------------------------------------------------------------------------------------------------------------------------------------------------------------------------------------------
\section{Introduction}
One of the classical problems in extremal graph theory is that of finding the maximum density of a graph $G$ which does not contain some fixed graph $H$.  This density is known as the Tur\'an density for $H$ and is defined as $\pi(H)=\lim_{n\rightarrow \infty}\frac{ex(n,H)}{{n \choose 2}}$, where  the Tur\'an number $ex(n,H)$ is the maximum number of edges in a graph on $n$ vertices which does not have $H$ as a subgraph. Mantel  proved that $\pi(K_3)=\frac{1}{2}$ and later Tur\'an gave a complete answer for $H=K_t$ \cite{Tu}.  

Motivated by a question of Erd\H os regarding the maximum density of a triangle free subgraph of a graph \cite{BSTT} investigated a modified version of the Tur\'an density,  namely that of finding the maximum density of a 3-partite graph which does not contain a $K_3$. The problem was given a complete solution in terms of the three bipartite graphs induced by each pair of vertex classes of the 3-partition. Later \cite{BJT} investigated the number of $K_3$s as a function of these densities, and  a sharp result was given for large enough densities. Enumeration of triangles in general graphs has a long history and was finally solved by Razborov \cite{R2}. 

Our aim in this paper is to investigate generalisations of these questions to uniform hypergraphs. In particular we will determine the maximal density of an $(r+1)$-partite $r$-uniform hypergraph which does not contain the complete hypergraph $K_{r+1}^r$.  We will also present a sharp bound on the number of copies of $K_{r+1}^r$.   These results demonstrate a qualitative difference between graphs and hypergraphs with $r\geq 3$, where interestingly enough the extension to the hypergraph case is less complex than the graph case.  The existence condition found in \cite{BSTT} for graphs is non-linear in terms of the involved densities, as are the counting results from  \cite{BJT}, but as our results show the corresponding condition, and counting function, for $r\geq 3$ are given by  simple linear functions.

For hypergraphs far less is known in the non-partite case than for graphs. Tur\'an conjectured that $\pi(K_4^{3}) = \frac{5}{9}$, and gave a matching construction for the lower bound. Using flag-algebra  Razborov \cite{R} has proven that $\pi(K_4^{3}) \leq 0.56167$. For $r=4$ Giraud \cite{Gi} gave a construction which shows that $\pi(K_5^4)\geq \frac{11}{16}$, and Sidorenko \cite{Sid} conjectured that this is in fact the correct value. The best current upper bound $\pi(K_5^4)\leq \frac{1753}{2380}$ was given by the first author in \cite{Ma}.  For $r\geq 5$ much less is known.  De Caen \cite{dC} proved that $\pi(K_{r+1}^r)\leq (1-\frac{1}{r})$, and this was later sharpened somewhat for odd $r$ in \cite{CL} and even $r$ in \cite{LZ}.  As a corollary to one of our results we will get a short proof of de Caen's bound. 

Mubayi and Talbot \cite{MT} investigated the global density of a 4-partite 3-graph $G$ such that $K_4^{3}\not\subset G$. and proved the sharp result  $|E(G)|\leq\frac{8}{17} {|V(G)| \choose 3}$.  

One of the few counting results for hypergraphs is by  Mubayi \cite{DM} who  used the hypergraph removal lemma \cite{Go07,NRS,RS06,T06} to prove lower bounds on the number of copies of $H$ in a hypergraph with $ex(n,H)+q$ edges. These bounds apply to certain forbidden hypergraphs $H$ with the property that there is a unique $H$-free hypergraph on $ex(n,H)$ edges, and the bounds are  of the form $q c(n,H)$,  where $c(n,H)$ is the minimum number of copies of $H$ in any $n$-vertex $r$-graph on $ex(n,H)+1$ edges.

%------------------------------------------------------------------
\subsection{Definitions}
For $r \geq 2$ we will refer to an $r$-uniform hypergraph as an $r$-graph. An $r$-graph $G$ is  $t$-partite  if its vertex has a partition in vertex classes  $V_1,\ldots,V_t$, such that each edge has at most one vertex in any class $V_i$.

\begin{definition}
	Given a $t$-partite $r$-graph $G$ with vertex classes $V_1,\ldots,V_t$ and a set of indices $I$ we let $P_I$ denote the $(t-|I|)$-partite $r$-graph induced 
	by the classes with indices not in $I$.  If $I=\{i\}$ then we abbreviate this as $P_i$ 
\end{definition}

As in \cite{BSTT} and \cite{BJT} we are going to work with vertex weighted $r$-graphs. We assume that $G$ has a weight-function $w$ defined on its vertices.  The weight of a set $S$ of vertices is $w(S)=\sum_{v\in S} w(v)$.

We define the weight of an edge $e=\{v_1,\ldots, v_r \}$ to be the product $\prod_{i=1}^r w(v_i)$, and the weight of a set of edges as the sum of their weights.

An unweighted $r$-graph will here be seen as the same graph with a weight function $w(v)=1$ for all vertices $v$.

\begin{definition}
	Given an $(r+1)$-partite $r$-graph $G$  we let $\rho(i)$ be the density of the $r$-partite graph induced by  $P_i$, which is 
	$\frac{w(E(P_i))}{\prod_{j\neq i}w(V_j)}$, and $\overline{\rho}$ is the vector of these densities.
\end{definition}

%------------------------------------------------------------------------------------------------------------------------------------------------------------------------------------------------
\section{Threshold and the minimum number of $K_{r+1}^r$ in an $(r+1)$-partite $r$-graph}

Given an $(r+1)$-partite $r$-graph we want to count the number of $K_{r+1}^r$s it contains. It will be convenient to do this in terms of the density of $K_{r+1}^r$s
\begin{definition}
	Given an $(r+1)$-partite $r$-graph  $G$ we define the density of $K_{r+1}^r$s in $G$ to be
	$$C(G)=\left (\sum_{S=K_{r+1}^r\subset G} \prod_{v \in S} w(s)\right) / \prod_{i=1}^{r+1}w(V_i)$$
	
	We define the minimum density of $K_{r+1}^r$s as 
	$$  C(\overline{\rho}) = \inf_{G} C(G), $$
	where the infimum is taken over all $(r+1)$-partite $r$-graphs with density $\overline{\rho}$.
\end{definition}

We will need the following polynomial from \cite{BJT}.
\begin{definition}
	$\Delta(a,b,c)=a^2+b^2+c^2-2ab-2ac-2bc+4abc$
\end{definition}
The proof of the following lemma is a routine, but somewhat lengthy,  calculus exercise.
\begin{lemma}\label{pos}
	If $a,b,c$ are real numbers in the interval $[0,1]$ and $a+b+c\geq \frac{9}{4}$ then $\Delta(a,b,c)\geq 0$, and  $a b + c >1$, $a c + b >1$, and $b c+a >1$.
\end{lemma}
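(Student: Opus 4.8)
The statement has two conjuncts for $a,b,c \in [0,1]$ with $a+b+c \ge \tfrac94$: first that $\Delta(a,b,c) \ge 0$, and second the three symmetric inequalities $ab+c>1$, $ac+b>1$, $bc+a>1$. I would dispose of the three product inequalities first, since they are the easier half and they will also be used as a stepping stone. By symmetry it suffices to prove $ab + c > 1$. The plan is to observe that on the box $[0,1]^3$ the function $f(a,b,c) = ab + c - 1$ is coordinatewise monotone: it is increasing in $c$ always, and for fixed $c$ it is bilinear in $(a,b)$, hence its minimum over the region is attained at a vertex of the polytope $\{a,b,c\in[0,1],\ a+b+c\ge \tfrac94\}$ or on the constraint face $a+b+c=\tfrac94$. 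A direct check: the constraint $a+b+c \ge \tfrac94$ forces $a+b \ge \tfrac94 - c \ge \tfrac54$ whenever $c \le 1$, and then $ab$ is minimized (for fixed sum $s=a+b\ge\tfrac54$ with $a,b\le1$) at the endpoint $\{a,b\}=\{1,s-1\}$, giving $ab \ge s-1 \ge \tfrac14 - c$... wait, that gives $ab + c \ge \tfrac14$, which is too weak. The sharper route: for fixed $c$, minimize $ab$ subject to $a+b \ge \tfrac94-c$, $a,b\in[0,1]$; the minimum of the product over that set is $(\tfrac94-c)-1 = \tfrac54-c$ when $\tfrac94-c \ge 1$, i.e. $c\le \tfrac54$, taken at $(1,\tfrac54-c)$ — wait, that product is $1\cdot(\tfrac54-c)$ only if $\tfrac54-c\le 1$. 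In any case one gets $ab+c \ge \tfrac54-c+c = \tfrac54 > 1$ in the regime where one of $a,b$ equals $1$, and otherwise $a=b=c=\tfrac34$ is forced on the interior of the constraint, giving $ab+c = \tfrac{9}{16}+\tfrac34 = \tfrac{21}{16} > 1$. So I would set up the Lagrange/vertex enumeration carefully and check the finitely many candidate points; the strict inequality holds at each.

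For $\Delta(a,b,c)\ge 0$ I would use the factorization identity that makes $\Delta$ natural here: $\Delta(a,b,c) = (a+b+c-1)^2 - 4(1-a)(1-b)(1-c)$... let me instead recall the cleaner one, $\Delta(a,b,c) = (a-b-c)^2 - 4bc(1-a)$ — wait, expanding $(a-b-c)^2 = a^2+b^2+c^2-2ab-2ac+2bc$, so $\Delta = (a-b-c)^2 - 4bc + 4abc = (a-b-c)^2 - 4bc(1-a)$. Similarly $\Delta = (b-a-c)^2 - 4ac(1-b) = (c-a-b)^2 - 4ab(1-c)$. Now the idea: if any one of $a,b,c$ equals $1$, the corresponding identity shows $\Delta$ is a perfect square, hence $\ge 0$, and we are done. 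In general I would view $\Delta$ as a quadratic in $a$ (say): $\Delta = a^2 - 2a(b+c-2bc) + (b-c)^2$. Its discriminant in $a$ is $4(b+c-2bc)^2 - 4(b-c)^2 = 4\big((b+c-2bc)^2-(b-c)^2\big)$, and $(b+c-2bc)^2 - (b-c)^2 = (2b-2bc)(2c-2bc) = 4bc(1-b)(1-c)\ge 0$. So as a quadratic in $a$, $\Delta$ has real roots $a_\pm = (b+c-2bc) \pm 2\sqrt{bc(1-b)(1-c)}$, and $\Delta \ge 0$ iff $a \le a_-$ or $a \ge a_+$. Thus the plan reduces to: under the hypothesis $a+b+c\ge\tfrac94$ and $a,b,c\in[0,1]$, show $a \ge a_+ = b+c-2bc + 2\sqrt{bc(1-b)(1-c)}$.

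The main obstacle is exactly this last inequality $a \ge a_+(b,c)$. I would bound $a_+$ from above: since $\sqrt{bc(1-b)(1-c)} = \sqrt{b(1-b)}\sqrt{c(1-c)} \le \tfrac12\sqrt{b(1-b)} \cdot ... $, use AM–GM to write $2\sqrt{bc(1-b)(1-c)} \le \big(b(1-c) + c(1-b)\big) $ — wait that's $2\sqrt{[b(1-c)][c(1-b)]}\le b(1-c)+c(1-b) = b+c-2bc$, so $a_+ \le 2(b+c-2bc) = 2(b+c) - 4bc$. Hmm, I need $a \ge 2(b+c)-4bc$, but the hypothesis only gives $a \ge \tfrac94 - b - c$; I'd need $\tfrac94 - b - c \ge 2(b+c) - 4bc$, i.e. $4bc + \tfrac94 \ge 3(b+c)$, which is false near $b=c=1$ (gives $4+\tfrac94 \ge 6$, true) but fails near $b=1,c=\tfrac14$. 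So the crude AM–GM bound is too lossy; I expect to need the exact root and a more careful two-variable optimization of $g(b,c) := a_+(b,c) + b + c$ over $[0,1]^2$, showing its minimum is $\le \tfrac94$ — equivalently that the worst case is $a=b=c=\tfrac34$ where equality $a_+ = \tfrac34$ holds (check: $b+c-2bc = \tfrac32 - \tfrac98 = \tfrac38$, and $2\sqrt{bc(1-b)(1-c)} = 2\sqrt{(\tfrac9{16})(\tfrac1{16})} = 2\cdot\tfrac{3}{16} = \tfrac38$, sum $=\tfrac34$, indeed). So the extremum is the symmetric point, and I would prove this either by a Lagrange-multiplier computation on the boundary $a+b+c=\tfrac94$ (the only place where $\Delta$ can vanish), reducing to a single-variable inequality, or by substituting $b = \tfrac12 + s$, $c = \tfrac12+t$ to symmetrize and killing the square root. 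This boundary optimization, and verifying no sign change of $\Delta$ occurs in the interior of the feasible region, is the crux; the rest is bookkeeping.
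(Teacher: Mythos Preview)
The paper itself provides no proof of this lemma beyond the sentence ``a routine, but somewhat lengthy, calculus exercise,'' so there is no detailed argument to compare against; your plan is exactly in that spirit.

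Your treatment of the three product inequalities is correct once the rambling is straightened out. Minimising $ab+c$ over the feasible region: the minimum lies on the face $a+b+c=\tfrac94$, and there $ab+c=(a-1)(b-1)+\tfrac54\ge\tfrac54>1$ since $a,b\le1$. That is a clean two-line argument; your write-up reaches the same $\tfrac54$ but should be pruned.

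For $\Delta\ge0$, your reduction via the quadratic-in-$a$ representation and the explicit roots $a_\pm=(b+c-2bc)\pm2\sqrt{bc(1-b)(1-c)}$ is valid, and proving $a\ge a_+$ is sufficient. You correctly identify that the AM--GM bound is too weak and that the remaining two-variable optimisation is ``the crux,'' but you stop there. This is the genuine gap in the proposal: the plan is sound, but the main inequality is not established.

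Here is one way to close it. The worst case is on $a=\tfrac94-b-c$. Set $s=b+c$, $p=bc$, $q=(1-b)(1-c)=1-s+p$; then $a-(b+c-2bc)=\tfrac14+2q$, and squaring reduces $a\ge a_+$ to
\[
(1-b)(1-c)\bigl(4(b+c)-5\bigr)\ \le\ \tfrac1{16}.
\]
The left side is nonpositive when $b+c<\tfrac54$, and on $[0,1]^2$ its unique interior critical point is $b=c=\tfrac34$ (solve $8b+4c=9=4b+8c$), where it equals exactly $\tfrac1{16}$; on the boundary of the square it is $\le0$. Hence $a\ge a_+$ everywhere on the feasible region, with equality only at $a=b=c=\tfrac34$, and $\Delta\ge0$ follows. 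This is precisely the ``routine but somewhat lengthy'' computation the paper alludes to; your outline would be complete with it inserted.
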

In particular if each if each of $a, b$ and $c$ is at least $\frac{3}{4}$ then $\Delta(a,b,c)\geq 0$.

The following theorem was proven in \cite{BJT}.
\begin{theorem}\label{tridens}
	The minimum density of triangles  in a tripartite graph with edge densities given by $(a,b,c)$ such that $\Delta(a,b,c)\geq 0$,  $a b + c >1$, $a c + b >1$, and $b c+a >1$,  is given by $a+b+c-2$.
\end{theorem}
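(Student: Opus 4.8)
The plan is to prove the two inequalities separately: $C(\overline{\rho})\ge a+b+c-2$ for \emph{every} tripartite graph with the prescribed densities (this direction needs no hypothesis at all), and then a matching construction that does use the hypotheses.

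For the lower bound I would normalise the three classes $A_1,A_2,A_3$ to have total weight $1$, so that the bipartite densities coincide with the bipartite edge weights; say $G[A_1,A_2]$ has weight $c$, $G[A_1,A_3]$ weight $b$, $G[A_2,A_3]$ weight $a$. Fix $v\in A_3$ and let $\beta(v),\gamma(v)$ be its weighted degrees into $A_1$ and $A_2$. The weight of triangles through $v$ is the weight of those edges of $G[A_1,A_2]$ lying inside $N(v)$; since the edges of $G[A_1,A_2]$ meeting $A_1\setminus N(v)$ or $A_2\setminus N(v)$ have total weight at most $(1-\beta(v))+(1-\gamma(v))$, this is at least $c-2+\beta(v)+\gamma(v)$. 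Summing $w(v)$ times this bound over $v\in A_3$ and using $\sum_v w(v)\beta(v)=b$, $\sum_v w(v)\gamma(v)=a$ gives $C(G)\ge a+b+c-2$.

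For the upper bound I would read off the structure forced by equality in the bound above: for almost every $v$ in each class, the non-neighbourhood of $v$ in each of the other two classes must be completely joined to the whole of the remaining class, and there must be no edge between the two non-neighbourhoods. This points to splitting each class as $A_i=B_i\sqcup C_i$ with $B_i$ completely joined to $A_{i+1}$ and $C_i$ completely joined to $A_{i-1}$ (indices mod $3$), and declaring $u\in A_i$, $v\in A_{i+1}$ non-adjacent exactly when $u\in C_i$ and $v\in B_{i+1}$. Writing $c_i=w(C_i)$, the three bipartite densities become $1-c_1(1-c_2)$, $1-c_2(1-c_3)$, $1-c_3(1-c_1)$; and since no two of the three possible non-edges of a transversal triple can occur at once ($B_i\cap C_i=\emptyset$), the triangle density equals $1$ minus the sum of the three non-edge densities, i.e. exactly $a+b+c-2$. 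So the theorem reduces to solving $c_2(1-c_3)=1-a$, $c_3(1-c_1)=1-b$, $c_1(1-c_2)=1-c$ with $(c_1,c_2,c_3)\in[0,1]^3$.

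Finally I would do the feasibility analysis. Eliminating $c_1$ and $c_2$ leaves the quadratic $c\,t^2-(a-b+c)t+a(1-b)=0$ for $t=c_3$, whose discriminant simplifies to precisely $\Delta(a,b,c)$; thus $\Delta(a,b,c)\ge0$ is exactly the condition for a real root. One then checks that $c_1=1-(1-b)/c_3$ and $c_2=(1-a)/(1-c_3)$ lie in $[0,1]$ provided $c_3\in[1-b,a]$, and that the vertex of the parabola lies in this interval and the parabola is nonnegative at both endpoints — this is where the inequalities $bc+a>1$ and $ac+b>1$ come in (each equivalent, after clearing denominators, to a statement like $(1-a)(1-c)\ge0$), forcing both roots into $[1-b,a]$; the third inequality $ab+c>1$ is the symmetric counterpart for the other eliminations. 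I expect the main obstacle to be precisely this construction-and-feasibility step: reverse-engineering the extremal graph from the equality case, and then matching the admissible parameter region exactly to the stated hypotheses (with due care for the degenerate cases $c_i\in\{0,1\}$). By contrast the lower bound is essentially a one-line double count.
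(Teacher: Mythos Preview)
The paper does not actually prove this theorem: it is quoted verbatim from \cite{BJT} and no argument is given in the present paper. So there is nothing to compare your proof against here. That said, a few remarks are in order.

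Your lower bound is correct and is in fact the $r=2$ instance of the simple double count the paper carries out in Theorem~\ref{bt}(1): summing $\mathbf{1}(H\setminus i)$ over transversal triples $H$ and using $\sum_i \mathbf{1}(H\setminus i)\le 2+\mathbf{1}(H)$ yields $a+b+c\le 2+C(G)$, which is exactly your vertex-by-vertex inequality repackaged.

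Your construction is also the right one and matches what the paper reports about the \cite{BJT} extremal example. In the proof of Lemma~\ref{ub} the paper says that the optimal tripartite graph from \cite{BJT} is a weighted graph on six vertices whose $3$-partite complement is a perfect matching; your $A_i=B_i\sqcup C_i$ with non-edges $C_i\times B_{i+1}$ is precisely that graph, with $(w(C_1),w(C_2),w(C_3))$ the three free parameters. Your observation that no transversal triple can carry two non-edges, so the triangle density is exactly $a+b+c-2$, is the key structural point.

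The feasibility analysis is where your write-up is slightly muddled, though the conclusion is right. You say the endpoint values $f(1-b)$ and $f(a)$ of the quadratic $f(t)=ct^2-(a-b+c)t+a(1-b)$ are where $bc+a>1$ and $ac+b>1$ enter, reducing to ``$(1-a)(1-c)\ge 0$''. In fact $f(a)=a(1-a)(1-c)$ and $f(1-b)=b(1-b)(1-c)$ are nonnegative automatically for $a,b,c\in[0,1]$; the hypotheses are needed instead to place the vertex $t^*=(a-b+c)/(2c)$ inside $[\,1-b,\,a\,]$. Concretely, $t^*>1-b$ rewrites as $a>b+c-2bc$, and since $b+c-2bc\le 1-bc$ (equivalent to $(1-b)(1-c)\ge 0$) this follows from $bc+a>1$; symmetrically $t^*<a$ follows from $ac+b>1$. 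The remaining hypothesis $ab+c>1$ guarantees $c>0$, so the parabola is genuinely upward-opening. With these corrections your argument goes through; the degenerate boundary cases (e.g.\ $a=1$) are handled by taking the corresponding $c_i\in\{0,1\}$ directly.
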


We will fist extend the lower bound given by this theorem to larger values of $r$.
\begin{lemma}\label{ub}
	If $\overline{\rho}$ is a  vector with rational numbers between 0 and 1, such that \\
	$$\sum_{i=1}^{r+1} \rho(i) - r \geq 0$$ \\
	then there exists a weighted $(r+1)$-partite $r$-graph $G$ with rational weights such that $\overline{\rho}$ is the density vector of $G$ and  $C(G)=\sum_{i=1}^{r+1} \rho(i)-r$.
\end{lemma}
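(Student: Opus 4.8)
The plan is to construct $G$ explicitly as a blow-up of a small ``gadget'' hypergraph, choosing the weights so that $r$ of the $(r+1)$ partite densities are forced to be $1$ and only one density carries the slack. First I would observe that it suffices to treat the extremal-looking instance: by symmetry and a continuity/monotonicity argument, the worst case for the infimum $C(\overline\rho)$ should occur when all the ``excess'' is concentrated, so I aim for a graph in which $P_{r+1}$ is the complete $r$-partite $r$-graph (density $1$ in each of its coordinates) and the pairs involving $V_{r+1}$ are pruned. Concretely, I would split each class $V_i$ ($1\le i\le r$) into two parts and each contains a ``heavy'' clone and a ``light'' clone with rational weights $x_i$ and $1-x_i$; vertices of $V_{r+1}$ come in two weighted types as well. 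The incidence pattern between the clones is chosen so that an $r$-set meeting $V_{r+1}$ is an edge only in prescribed combinations, which kills exactly the right amount of weight in each $\rho(i)$ while keeping $\rho(r+1)=1$ for free, and so that a transversal $(r+1)$-set is a copy of $K_{r+1}^{(r)}$ only when its ``$V_{r+1}$-face'' and its ``opposite'' faces are simultaneously present.

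The key computation is then a bookkeeping identity: writing $t_i = 1-\rho(i)$ for $i\le r$ (so $\sum t_i = r - \sum_{i\le r}\rho(i) \le 1$ by hypothesis, since $\rho(r+1)\le 1$), I would choose the weights so that the total weight of copies of $K_{r+1}^{(r)}$, divided by $\prod_i w(V_i)$, equals $1 - \sum_{i=1}^r t_i = \sum_{i=1}^{r+1}\rho(i) - r$ exactly. This mirrors the ``$a+b+c-2$'' shape of Theorem~\ref{tridens} but is now genuinely linear because in the hypergraph regime the ``bad'' $r$-sets through $V_{r+1}$ interact additively rather than multiplicatively. I would verify: (i) that with the constructed incidences $\rho(i)$ takes the prescribed value for each $i$ (a direct edge count in each $P_i$), (ii) that $\rho(r+1)=1$ provided $\sum_{i\le r} t_i \le 1$, which is where the hypothesis $\sum\rho(i) - r \ge 0$ is used, and (iii) the count of clique-weight. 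Finally, since all $t_i$, and hence all chosen weights, are rational, $G$ has rational weights as required, and one can optionally pass to an unweighted blow-up on a common denominator.

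The main obstacle I anticipate is step (i)–(iii) simultaneously: designing one incidence pattern that makes $r$ different densities hit arbitrary prescribed rationals $\rho(i)$ while keeping the remaining density pinned at $1$ \emph{and} making the clique count collapse to the clean linear expression. The natural move is to let the ``defect'' in class $V_i$ be carried by a single light clone of weight $t_i$ that is simply non-adjacent (within $P_i$) in a way correlated across the $r$ classes --- e.g.\ partition a ``defect budget'' on $V_{r+1}$ into $r$ disjoint slices of weights $t_1,\dots,t_r$, with the $i$-th slice being the unique obstruction to edges of $P_i$. Because the slices are disjoint, the lost clique-weight is $\sum t_i$ rather than something like $1-\prod(1-t_i)$, which is exactly what produces linearity; checking that no double-counting occurs and that the leftover weight $1-\sum t_i$ on $V_{r+1}$ genuinely extends to full cliques is the crux of the verification. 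I would also double-check the boundary cases $\sum t_i = 1$ (clique density $0$) and $t_i = 0$ for all $i$ (clique density $1$, $G$ the complete $(r+1)$-partite $r$-graph) to make sure the construction degenerates correctly.
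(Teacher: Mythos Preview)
Your construction has a genuine gap: it only produces graphs with $\rho(r+1)=1$. With $V_1,\dots,V_r$ single vertices, $P_{r+1}$ has a unique transversal and must be complete; indeed your identity $1-\sum_{i\le r}t_i=\sum_{i=1}^{r+1}\rho(i)-r$ holds precisely when $\rho(r+1)=1$. The opening ``continuity/monotonicity'' reduction does not rescue this: the lemma asks for a $G$ whose density vector is \emph{exactly} the prescribed $\overline\rho$ with $C(G)$ hitting the linear value on the nose, so you cannot pass to a more convenient vector. Nor can you fix it after the fact by blowing up $V_1,\dots,V_r$ and deleting a $t_{r+1}$-fraction of the $P_{r+1}$-edges: each deleted edge removes clique weight $w(s_0)=1-\sum_{i\le r}t_i$, yielding $C(G)=(1-t_{r+1})\bigl(1-\sum_{i\le r}t_i\bigr)$ instead of the required $1-\sum_{i\le r+1}t_i$. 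The very disjointness of your slices, which makes the count linear, breaks once a second class has to carry a defect.

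The paper proceeds by induction on $r$. The base case $r=2$ is the non-trivial Theorem~\ref{tridens}; one sorts the $\rho(i)$ and uses Lemma~\ref{pos} to check that the three largest satisfy its hypotheses. For the inductive step one takes an optimal $r$-partite $(r-1)$-graph $H_1$ realising the bound for $(\rho(1),\dots,\rho(r))$, cones every edge of $H_1$ over a new one-vertex class $V_{r+1}$ (this fixes $\rho(1),\dots,\rho(r)$ and creates no $K_{r+1}^{(r)}$), and then adds edges inside $P_{r+1}$. An $r$-tuple there completes to a $K_{r+1}^{(r)}$ exactly when it already spanned a $K_r^{(r-1)}$ in $H_1$; by induction the density of non-$K_r^{(r-1)}$ tuples is $1-\bigl(\sum_{i\le r}\rho(i)-(r-1)\bigr)$, so filling $P_{r+1}$ to density $\rho(r+1)$ forces exactly $\sum_{i\le r+1}\rho(i)-r$ clique density. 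What your sketch lacks is precisely this inductive handle on the first $r$ coordinates, which is what lets $\rho(r+1)$ be arbitrary.
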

\begin{proof}
	We will prove the statement by induction on $r$.  For $r=2$ the statement is true by Theorem 2.4  of \cite{BJT}, which is also Theorem 2.4 in the present paper.  By that theorem there is an optimal graph 
	which is a weighted 3-partite graph on six vertices whose 3-partite complement is a perfect matching. Hence we only need to prove the statement for $r\geq 3$.
	 
	 Without loss of generality we assume that $\rho(i)\geq\rho(i+1)$ for $1\leq i \leq r$ and hence, by averaging, that  
	 $\rho(1)+\rho(2)+\rho(3)\geq \frac{3r}{r+1}\geq\frac{9}{4}$. Thus, by Lemma \ref{pos}, we know that  the conditions of Theorem \ref{tridens} are satisfied by these three values.
	 
	 We now assume that the statement is true for $r-1$. In order to build an $r$-graph $H_2$ which satisfies the statement in the Lemma we take an $r$-partite 
	 $(r-1)$-graph $H_1$, with the first $r$ parts of $\overline{\rho}$ as its density vector, which satisfies the statement for $r-1$, and build an $(r+1)$-partite 
	 $r$-graph $H_2$ by adding one new class $V_{r+1}$ which contains a single vertex $v$, with weight 1.  
	 
	 For every edge $e$ in $H_1$ we let $e \cup \{v\}$ be an edge 	 of $H_2$. This means that for $i\neq r+1$  the density contributed to $\rho(i)$ by these edges will 
	 be the same as in $H_1$.  Hence $H_2$ has the desired density $\rho(i)$ for these classes, and the edges added so far do not give rise to a $K_{r+1}^r$.

	We will now add edges among the first $r$ classes.  One of these edges will be part of a weighted $K_{r+1}$ in $H_2$  if the corresponding $r$-tuple in $H_1$ is a $K_r^{(r-1)}$ in $H_1$, 
	and by induction we know the  density of such $K_{r}^{(r-1)}$.
	
	As pointed out in \cite{BJT}  we may assume that $H_1$ is in fact a simple unweighted hypergraph, since a hypergraph with rational weights can be modified by a suitable 
	blow-up into an equivalent unweighted hypergraph on a larger number of vertices.
	
	In  $H_1$  the density of partite $r$-tuples which do not span a $K_{r}^{(r-1)}$  is $1- \left( \sum_{i=1}^{r} \rho(i)-(r-1)\right)$, and all these tuples can be added as edges 
	without creating a $K_{r+1}^r$ in $H_2$.    Each additional edge added after these form a unique $K_{r+1^r}$ together with the vertex in $V_{r+1}$, so if we add enough 
	edges to reach the desired density $\rho(r+1)$ we will have 
	 $$  	  \rho(r+1) - \left(1- \left( \sum_{i=1}^{r} \rho(i)-(r-1) \right ) \right )   	 =  \sum_{i=1}^{r+1} \rho(i)-r $$
	 density of $K_{r+1}^r$ in $H_2$.	 	
\end{proof}

Our next step will to prove a lower bound for the density of $K_r$s.
\begin{theorem}\label{bt}
	Let $G$ be an $(r+1)$-partite $r$-graph, then the density of $K_{r+1}^r$s satisfies the following inequalities
	\begin{enumerate}
		\item	 $C(\overline{\rho})\geq \left(\sum_{i=1}^{r+1} \rho(i)\right )-r$.
		\item If $\overline{\rho}$ satisfies the conditions of Lemma \ref{ub}  then $C(\overline{\rho})=\left(\sum_{i=1}^{r+1} \rho(i)\right )-r.$
	\end{enumerate}
\end{theorem}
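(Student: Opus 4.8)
The second statement will follow at once from the first together with Lemma~\ref{ub}: since $C(\overline{\rho})$ is an infimum of $C(G)$ over all $(r+1)$-partite $r$-graphs with density vector $\overline{\rho}$, part~(1) gives $C(\overline{\rho}) \geq \sum_i \rho(i) - r$, while Lemma~\ref{ub} produces a weighted $G$ with this exact density vector for which $C(G) = \sum_i \rho(i) - r$, so the infimum is attained and the two agree. (This is where the rationality hypothesis of Lemma~\ref{ub}, inherited by the hypothesis of part~(2), is actually used, via the blow-up that turns the weighted extremal graph into an unweighted one.) Hence the whole content is the lower bound in part~(1), and the plan is to prove it by a weighted union bound over the $r+1$ facets of a transversal --- essentially a partite, vertex-weighted refinement of de Caen's counting argument.

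Concretely, fix an $(r+1)$-partite $r$-graph $G$ with classes $V_1,\dots,V_{r+1}$ and weight function $w$, and write $W = \prod_{j=1}^{r+1} w(V_j)$. The key structural remark is that every edge of $G$ has $r$ vertices and therefore misses exactly one class; hence the edge sets $E(P_1),\dots,E(P_{r+1})$ partition $E(G)$, and for any transversal $T = \{v_1,\dots,v_{r+1}\}$ with $v_i \in V_i$ the $r$-set $T \setminus \{v_i\}$, if it is an edge of $G$ at all, is an edge of $P_i$. Let $f_i(T) = 1$ if $T\setminus\{v_i\} \in E(G)$ and $f_i(T) = 0$ otherwise, and set $\phi(T) = \sum_{i=1}^{r+1}\bigl(1 - f_i(T)\bigr)$, the number of ``missing facets'' of $T$. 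Then $T$ spans a copy of $K_{r+1}^r$ precisely when $\phi(T) = 0$, and $\phi(T) \geq 1$ for every other transversal.

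Now I would evaluate $\sum_T w(T)\,\phi(T)$, where the sum is over all transversals $T$ and $w(T) = \prod_i w(v_i)$, in two ways. On one hand, for each fixed $i$, summing $w(T) f_i(T)$ over transversals amounts to choosing an edge $e \in E(P_i)$ (contributing its weight $w(e)$) and then a vertex $v_i \in V_i$ (contributing $w(v_i)$), which gives $w(E(P_i))\,w(V_i) = \rho(i)\,W$; since the total weight of all transversals is $W$, we obtain $\sum_T w(T)\,\phi(T) = \sum_{i=1}^{r+1}\bigl(W - \rho(i)\,W\bigr) = W\bigl(r+1 - \sum_i \rho(i)\bigr)$. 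On the other hand, dropping the transversals with $\phi(T) = 0$ and using $\phi(T) \geq 1$ on the rest, $\sum_T w(T)\,\phi(T) \geq \sum_{T \text{ not a } K_{r+1}^r} w(T) = W - \sum_{T = K_{r+1}^r} w(T) = W\bigl(1 - C(G)\bigr)$. Comparing the two expressions and dividing by $W$ yields $C(G) \geq \sum_i \rho(i) - r$, which is part~(1), and taking the infimum over $G$ finishes part~(1) for $C(\overline{\rho})$.

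I do not expect a genuine obstacle: once the facet decomposition and the indicator $\phi$ are in place, everything is linearity of the weighted count plus the trivial inequality $\phi \geq 1$ on non-copies. The only points needing a little care are the weighted bookkeeping --- keeping the factor $W = \prod_j w(V_j)$ consistent throughout, or simply normalising $w(V_j) = 1$ for every $j$ from the outset --- and the remark that each edge of $G$ lies in exactly one $P_i$, which is what makes ``missing facet'' well defined and makes $\sum_i \rho(i)$ the right quantity. For part~(2) the only subtlety is to recall that $C(\overline{\rho})$ is defined as an infimum, so Lemma~\ref{ub} supplies precisely the matching construction, its rationality assumption already being part of the hypothesis.
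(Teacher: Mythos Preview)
Your proof is correct and is essentially the same argument as the paper's: the paper bounds $\sum_{i} f_i(T) \le r + \mathbf{1}(T\text{ is }K_{r+1}^r)$ and sums over weighted transversals, which is exactly your inequality $\phi(T)\ge 1$ on non-copies written in complementary form. Part~(2) is handled identically, by invoking Lemma~\ref{ub} for the matching construction.
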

\begin{proof}
	\begin{enumerate}
		\item Let $\mathbf{1}(e)$, where $e$ is a set of size $r$,  be 1 if $e$ is an edge of $G$ and 0 otherwise, let $\mathbf{1}(H)$ be 1 if $H$ is the 
		vertex set of a $K_{r+1}$ contained in $G$, and 0 otherwise.
	
		Now 
		$$ \sum_{H} w(H)( \sum _{i \in H }\mathbf{1}(H\setminus i)   ) \leq   \sum_{H} w(H)( r + \mathbf{1}(H)  )  $$
	
		But the left hand side is the sum of the densities in $G$ and the right hand side is $r$ plus the density of $K_{r+1}^r$s so
	
		$$\sum_i \rho(i) \leq r + C(\overline{\rho})$$

		and so $C(\overline{\rho}) \geq \sum_i \rho(i) - r$, and we have proven part 1.
	
		\item Lemma \ref{ub} shows that the bound in Theorem \ref{bt}  is sharp, and that for rational densities satisfying the inequality in Lemma 2.4 equality is achieved for some finite $r$-graph.
	\end{enumerate}	
\end{proof}

\begin{corollary}
	If all densities $\rho(i)> \frac{r}{r+1}$ in $G$ then $K_{r+1}^r \subset G$ 
	
	If $r\geq 3$  and  $\rho(i)=\rho(j)$, for all $i,j$ and $\rho(i)\leq\frac{r}{r+1}$ then there exists $G$ with these densities such that $K_{r+1}^r \not\subset G$ 
\end{corollary}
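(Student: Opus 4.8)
\emph{Proof plan.} The first assertion will be immediate from Theorem~\ref{bt}: if every $\rho(i)>\frac{r}{r+1}$ then $\sum_{i=1}^{r+1}\rho(i)-r>(r+1)\cdot\frac{r}{r+1}-r=0$, so part~1 of Theorem~\ref{bt} gives $C(G)>0$, and a positive value of $C(G)$ means the defining sum is non-empty, i.e.\ $G$ contains a copy of $K_{r+1}^{r}$.

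For the second assertion, write $\rho\le\frac{r}{r+1}$ for the common density; the case $\rho=0$ is trivial, so assume $\rho>0$. The key observation is that the \emph{boundary} case $\rho=\frac{r}{r+1}$ is handed to us by the sharpness half of our earlier results: the constant vector $\overline{\rho}=(\frac{r}{r+1},\dots,\frac{r}{r+1})$ is rational, lies in $[0,1]^{r+1}$, and satisfies $\sum_{i=1}^{r+1}\rho(i)-r=0\ge 0$, so Lemma~\ref{ub} produces a weighted $(r+1)$-partite $r$-graph $G_0$ with this density vector and $C(G_0)=\sum_{i=1}^{r+1}\rho(i)-r=0$. Passing to a blow-up we may take $G_0$ unweighted, and for an unweighted graph $C(G_0)=0$ says literally that $G_0$ has no copy of $K_{r+1}^{r}$.

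It then remains to realise every common density $\rho$ with $0<\rho<\frac{r}{r+1}$. Here I would take the unweighted $G_0$ just constructed, arranged after a balanced blow-up to have all $r+1$ vertex classes of the same size $n$, and add to each class a single new vertex lying in no edge, with weight $\lambda\ge 0$. This leaves $w(E(P_i))$ unchanged for every $i$ while multiplying each $w(V_j)$ by $\tfrac{n+\lambda}{n}$, so every density becomes $\tfrac{r}{r+1}\bigl(\tfrac{n}{n+\lambda}\bigr)^{r}$, and choosing $\lambda$ with $\bigl(\tfrac{n}{n+\lambda}\bigr)^{r}=\tfrac{(r+1)\rho}{r}$ makes all densities equal to the prescribed $\rho$. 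Since a $K_{r+1}^{r}$ must use one vertex from each class and no $r$-subset containing an isolated vertex is an edge, the new graph has exactly the same (empty) set of copies of $K_{r+1}^{r}$ as $G_0$. Equivalently, since $E(G)$ is partitioned by the hypergraphs $P_1,\dots,P_{r+1}$, one may instead delete a suitable proportion of the edges of each $P_i$, an operation that can never create a new copy of $K_{r+1}^{r}$.

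The only genuinely delicate point is the equality case $\rho=\frac{r}{r+1}$, where $K_{r+1}^{r}$-freeness is being claimed exactly at the threshold; this is precisely the content of the sharpness statement in Theorem~\ref{bt}(2)/Lemma~\ref{ub}, once one notes that for an unweighted host graph "$C=0$" and "contains no $K_{r+1}^{r}$" coincide. Everything else is monotonicity of the densities and of $C$ under deleting edges (or adding isolated vertices), together with the arithmetic identity $\sum_{i=1}^{r+1}\frac{r}{r+1}=r$.
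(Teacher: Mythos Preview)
Your argument is correct and is exactly the intended derivation: the paper states the corollary without proof, as an immediate consequence of Theorem~\ref{bt}, and you have simply spelled this out---part~1 from Theorem~\ref{bt}(1) via $\sum_i\rho(i)>r$, and part~2 from Lemma~\ref{ub} applied at the rational boundary vector $(\tfrac{r}{r+1},\dots,\tfrac{r}{r+1})$, followed by a monotonicity step (isolated vertices or edge deletion) to reach smaller common densities. The only thing worth noting is that the restriction $r\ge 3$ in the second part is exactly what makes the appeal to Lemma~\ref{ub} legitimate, since for $r=2$ the constant vector $(\tfrac{2}{3},\tfrac{2}{3},\tfrac{2}{3})$ fails the $\Delta\ge 0$ hypothesis underlying the base case and the conclusion is in fact false there.
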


We note that the graph which achieves the minimum number of copies of $K_{r+1}^r$s is not unique, i.e.the $(r+1)$-partite Tur\'an-problem for $K_{r+1}^r$ is not stable.  Byt the existing results for $r=2$  we know that 
there is not a unique graph which minimizes the number of triangles and Lemma \ref{ub} gives distinct extensions to higher values of $r$. 

%------------------------------------------------------------------------------------------------------------------------------------------------------------------------------------------------
\section{Balanced Codegrees}
Our second result concerns degrees rather than densities, and for $r$-graph we have found it natural to consider the degrees of the $(r-1)$-tuples of vertices in $G$.

\begin{definition}
	Given a multipartite $r$-graph $G$, with $V(G)$ partitioned into at least $r$ classes, we say that a $t$-tuple $g$ of vertices from $G$ is partite if it has at most one vertex in each class of $G$.
\end{definition}

\begin{definition}
	Given a partite $t$-tuple $g$ we say that an $(r-t)$-tuple $h$ is completing $g$ if $(g \cup h)\in E(G)$. We call the set of completing $(r-t)$-tuples for $g$ the neighbourhood $N(g)$ of $g$.  The neighbourhood of $g$ in a set $I$ of classes is the set of completing $(r-t)$-tuples in $I$, and is denoted $N(I,g)$	
	
	The degree  of $g$ is $d(g)=|N(g)|$ and the degree in an $(r-t)$-tuple $I$ of classes is $d(I,g)=|N(I,g)|$.		
\end{definition}

By the minimum codegree of an (multipartite) $r$-graph $G$ we refer to the minimum degree of all (partite) $(r-1)$-tuples of vertices in $V(G)$

\begin{definition}
	We say that a partite $t$-tuple $g$ of vertices in $G$ has strictly balanced degree if it has the same number of 
	completing $(r-t)$-tuples in each of the $(r-t)$-tuples of  classes which $g$ does not intersect.	
\end{definition}

\begin{theorem}\label{degth}
	If the partite $(r-1)$-tuples of $G$ have strictly balanced degrees and  $\max_j \sum_{i \neq j} \rho(i) > (r-1)$  then  $K_{r+1}^r\subset G$.
\end{theorem}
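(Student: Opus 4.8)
Here is the plan I would follow.

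\medskip

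\noindent\emph{Step 1: reduce to a one-density condition.} Pick an index $j$ realizing $\max_j\sum_{i\neq j}\rho(i)$. Then the $r$ numbers $\{\rho(i):i\neq j\}$ sum to more than $r-1$, so by averaging there is an index $c\neq j$ with $\rho(c)>\frac{r-1}{r}$. From this point on the argument uses only $\rho(c)>\frac{r-1}{r}$ together with the balanced-degree hypothesis for the partite $(r-1)$-tuples that miss the class $V_c$.

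\medskip

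\noindent\emph{Step 2: the construction, and where balancedness enters.} I would build $K_{r+1}^r$ by taking an edge $e=\{v_i:i\neq c\}$ of $P_c$ as a ``base face'' and extending it by a vertex $v_c\in V_c$. Put $f_i:=e\setminus\{v_i\}$ for $i\neq c$, a partite $(r-1)$-tuple missing exactly $V_i$ and $V_c$. What is needed is a vertex $v_c\in\bigcap_{i\neq c}N(\{V_c\},f_i)$: then the $r$-set of $\{v_i:i\neq c\}\cup\{v_c\}$ missing $v_c$ is $e$, and the one missing $v_i$ is $f_i\cup\{v_c\}$, so the $r+1$ vertices span $K_{r+1}^r$. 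By the union bound such a $v_c$ exists as soon as $\sum_{i\neq c}\bigl(1-w(N(\{V_c\},f_i))/w(V_c)\bigr)<1$. Since $f_i$ misses exactly $V_i$ and $V_c$, strictly balanced degree gives $w(N(\{V_c\},f_i))/w(V_c)=w(N(\{V_i\},f_i))/w(V_i)=:\delta_i(f_i)$, so it suffices to exhibit one edge $e$ of $P_c$ with $\sum_{i\neq c}(1-\delta_i(f_i))<1$.

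\medskip

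\noindent\emph{Step 3: find a good base edge by averaging.} Choose the $v_i$ ($i\neq c$) independently, each with probability proportional to the vertex weights; then $e$ is a uniform-weight partite $r$-tuple on the classes $\neq c$ with $\mathbf{Pr}[e\in E(P_c)]=\rho(c)$, and for each fixed $i\neq c$ the tuple $f_i$ is a uniform-weight partite $(r-1)$-tuple with $\mathbf E[\delta_i(f_i)]=\rho(c)$. Conditioning on $f_i$ and then on the choice of $v_i$,
\[
\mathbf E\bigl[\mathbf 1[e\in E(P_c)]\,(1-\delta_i(f_i))\bigr]=\mathbf E\bigl[\delta_i(f_i)(1-\delta_i(f_i))\bigr]=\rho(c)-\mathbf E[\delta_i(f_i)^2]\le\rho(c)-\rho(c)^2,
\]
the last step by Jensen. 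Summing over the $r$ indices $i\neq c$: if no base edge were good, i.e. $\sum_{i\neq c}(1-\delta_i(f_i))\ge1$ for every $e\in E(P_c)$, then $\rho(c)=\mathbf E[\mathbf 1[e\in E(P_c)]]\le r\rho(c)(1-\rho(c))$, forcing $\rho(c)\le\frac{r-1}{r}$ and contradicting Step 1. Hence a good base edge exists, and combining with Step 2 yields the copy of $K_{r+1}^r$.

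\medskip

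\noindent\emph{Main obstacle.} The genuinely new idea is compact: take a base face in $P_c$, find a common neighbour in $V_c$, and use balancedness to trade the unknown degree into $V_c$ for the degree into $V_i$; the convexity step $\mathbf E[\delta^2]\ge(\mathbf E\delta)^2$ is exactly what turns the slack $\rho(c)>\frac{r-1}{r}$ into a usable inequality. The only real difficulty is bookkeeping: keeping the vertex weights, the relative degrees $\delta_i(\cdot)$ and the densities $\rho(i)$ normalised consistently so that $\mathbf E[\delta_i(f_i)]=\rho(c)$ genuinely holds for the weighted measure, and checking that the argument really is symmetric in the $r$ indices $i\neq c$. (One may also observe that balancedness of \emph{all} partite $(r-1)$-tuples forces all $\rho(i)$ to coincide, so the hypothesis is equivalent to ``$\rho(i)>\frac{r-1}{r}$ for every $i$''; the proof above does not use this, but it explains why the reduction in Step 1 loses nothing.)
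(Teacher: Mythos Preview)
Your argument is correct and is essentially the paper's proof: both take a base edge $e$ in some $P_j$, note that the $(r-1)$-subtuples of $e$ have a common neighbour in $V_j$ (yielding $K_{r+1}^r$) unless their degrees into $V_j$ sum to at most $(r-1)|V_j|$, average this constraint over all edges of $P_j$, use balancedness to replace $d(V_j,g)d(V_i,g)$ by $d(V_j,g)^2$, and finish with Cauchy--Schwarz (your Jensen step is the same inequality). The paper skips your Step~1 reduction by keeping the individual $\rho(i)$'s through the computation and arriving at $\sum_{i\neq j}\rho(i)\le r-1$ directly, and it reads ``strictly balanced'' as equality of \emph{raw} neighbourhood counts rather than the normalised version you use---a distinction that only matters when the parts have unequal sizes, and is exactly the bookkeeping issue you yourself flag.
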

Note that the condition on the partite $(r-1)$-tuples  means that such a tuple splits its neighbourhood equally between each of the two parts which the tuple does not intersect, but the sizes of those neighbourhoods may differ between different tuples.

\begin{proof}
	We will first look at the case $j=1$.  Let $e$ be an edge of $P_1$ and let $g_2,\ldots, g_{r+1}$ be the $(r-1)$-tuples which are subsets of $e$.  
	If $G$ does not contain $K_{r+1}^r$ we must have that 
	$$ S(e)= \sum_{i=2}^{r+1} d(V_1,g_i) \leq (r-1), $$
	since otherwise there would be a common vertex  in the neighbourhoods of the $(r-1)$-tuples, and we would have a $K_{r+1}^r$ .

	If we sum over all edges in $P_1$ we find that
	$$\sum_{e\in E(P_1)} S(e) \leq (r-1)\rho(1) \prod_{k\neq 1}|V_k| $$
	or equivalently
	$$\sum_{i=2}^{r+1}  \sum_{g\in P_{\{1,i\} }} d(V_1,g)d(V_i,g) \leq (r-1)\rho(1) \prod_{k\neq 1}|V_k|   $$
	 where the inner sum is over all partite $(r-1)$-tuples $g$.

	Using the fact that each $g$ has strictly balanced degree we rewrite this as
	$$\sum_{i=2}^{r+1}  \sum_{g\in P_{\{1,i\}} } d(V_1,g)^2  \leq (r-1)\rho(1) \prod_{k\neq 1}|V_k|   $$

	Using the Cauchy-Schwarz inequality we get
	
	$$\sum_{i=2}^{r+1}  \frac{(\sum_{g\in P_{\{1,i\}} } d(V_1,g))^2 }{ \prod_{k\neq 1,i}|V_k|    }        \leq (r-1)\rho(1) \prod_{k\neq 1}|V_k|   $$

	By definition 
	$$\sum_{g\in P_{\{1,i\}} } d(V_1,g) = \rho(i) \prod_{k\neq i }|V_k| . $$

	Substituting this we find that 
	$$\sum_{i=2}^{r+1}  \rho(i)^2\prod_{k\neq i}|V_k|   \leq (r-1)\rho(1) \prod_{k\neq 1}|V_k|   $$

	But $\rho( i )\prod_{k\neq i }|V_k| $ is independent of $i$ so we can divide both sides by $\rho(1) \prod_{k\neq 1}|V_k|  $ to get
	
	$$\sum_{i=2}^{r+1}  \rho(i)  \leq (r-1)    $$

	The results follows for other values of $j$ in the same way.
	
\end{proof}
As shown in  \cite{BSTT} this result is sharp for $r=2$, but we do not have a matching lower bound for larger $r$.

Relating to the case where all densities are equal we get the following
\begin{corollary}\label{co1}
	If all partite $(r-1)$-tuples of $G$ have strictly  balanced degree and  $\rho(i)> 1-\frac{1}{r}$ for all $i$ then $K_{r+1}^r\subset G$.
\end{corollary}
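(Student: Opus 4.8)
The plan is to deduce this directly from Theorem~\ref{degth}. That theorem asks for exactly two hypotheses: that the partite $(r-1)$-tuples of $G$ have strictly balanced degree, and that the density vector satisfies $\max_j \sum_{i\neq j}\rho(i) > r-1$. The first is assumed verbatim in the corollary, so the only thing to establish is that the uniform bound $\rho(i) > 1-\frac1r$ for all $i$ forces the maximum-sum condition.

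To check this I would fix an arbitrary index $j\in\{1,\dots,r+1\}$ and note that, since $G$ is $(r+1)$-partite, the sum $\sum_{i\neq j}\rho(i)$ contains exactly $r$ terms. Each term satisfies $\rho(i) > 1-\frac1r = \frac{r-1}{r}$, whence
$$\sum_{i\neq j}\rho(i) \;>\; r\cdot\frac{r-1}{r} \;=\; r-1 .$$
In particular $\max_j \sum_{i\neq j}\rho(i) > r-1$, so the hypotheses of Theorem~\ref{degth} are satisfied and we conclude $K_{r+1}^r\subset G$.

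There is no genuine obstacle here: the proof is a single averaging-style inequality followed by an appeal to Theorem~\ref{degth}, and all the substance of the corollary is already contained in that theorem. If one wanted a self-contained derivation one could instead re-run the Cauchy--Schwarz computation in the proof of Theorem~\ref{degth} with the bound $\rho(i) > \frac{r-1}{r}$ inserted from the start, but this merely reproduces the same estimate, so I would present the corollary as an immediate consequence rather than repeat that argument.
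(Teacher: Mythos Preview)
Your proposal is correct and matches the paper's approach exactly: the paper simply states Corollary~\ref{co1} as an immediate consequence of Theorem~\ref{degth} without giving a separate proof, and your one-line verification that $\rho(i)>\frac{r-1}{r}$ for all $i$ forces $\sum_{i\neq j}\rho(i)>r-1$ is precisely the intended deduction.
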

This can be compared to the minimum codegree which forces a $K_{r+1}^r$ in  the non-partite case. In \cite{LM} it was proven that there are $r$-graph with minimum $(r-1)$-degree  $\frac{1}{2}(n-2)$ which do not contain a $K_{r+1}^r$, and it was conjectured that this is an optimal bound. This in turn implies that the global density is at least $\frac{1}{2}$. 

Another simple corollary of this result is de Caen's upper bound on the Tur\'an-density of $K_{r+1}^r$.
\begin{corollary}[de Caen \cite{dC}]\label{dc}
	If $G$ is an $r$-graph and  $|E(G)|> (1-\frac{1}{r})\frac{n^r}{r!}$ then $K_{r+1}^r\subset G$.
 \end{corollary}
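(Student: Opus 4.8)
The plan is to deduce the bound from Theorem~\ref{degth} via a symmetrization that converts an arbitrary $r$-graph into a \emph{balanced} $(r+1)$-partite one. By contraposition it suffices to show that a $K_{r+1}^r$-free $r$-graph $G$ on $n$ vertices satisfies $|E(G)|\le (1-\tfrac1r)\tfrac{n^r}{r!}$.

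Given such a $G$, I would build an $(r+1)$-partite $r$-graph $H$ whose classes $V_1,\dots,V_{r+1}$ are $r+1$ disjoint copies of $V(G)$, all of weight $1$; write $v^{(i)}\in V_i$ for the copy of $v\in V(G)$. For an $r$-element index set $J\subseteq[r+1]$ and vertices $(v_i)_{i\in J}$ of $G$, let $\{v_i^{(i)}:i\in J\}$ be an edge of $H$ exactly when the $v_i$, $i\in J$, are $r$ distinct vertices of $G$ spanning an edge of $G$. Three facts then need to be checked. \emph{(i)} $H$ is $K_{r+1}^r$-free: a copy of $K_{r+1}^r$ in $H$ uses one vertex $u_j^{(j)}$ from each class, and inspecting its $r$-faces forces the $u_j$ to be pairwise distinct in $V(G)$ and every $r$-subset of $\{u_1,\dots,u_{r+1}\}$ to be an edge of $G$, contradicting $K_{r+1}^r\not\subset G$. \emph{(ii)} All densities are equal: counting an edge of $G$ together with a bijection of its vertices onto the $r$ classes of $P_j$ shows $|E(P_j)|=r!\,|E(G)|$, hence $\rho_H(i)=r!\,|E(G)|/n^r=:\rho$ for every $i$. \emph{(iii)} $H$ has strictly balanced codegrees: if $g$ is a partite $(r-1)$-tuple missing exactly the classes $V_a,V_b$, then $d(V_a,g)$ is the number of $w\in V(G)$ for which the underlying vertices of $g$ together with $w$ form a set of $r$ distinct vertices spanning an edge of $G$ — and this count does not see the index $a$, so it equals $d(V_b,g)$.

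With (i)--(iii) in hand, Theorem~\ref{degth} applied to $H$ in contrapositive form gives $\max_j\sum_{i\ne j}\rho_H(i)\le r-1$, that is $r\rho\le r-1$, so $\rho\le 1-\tfrac1r$. Unwinding the definition of $\rho$ yields $|E(G)|\le(1-\tfrac1r)\tfrac{n^r}{r!}$, which is the claimed bound.

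I expect no genuine obstacle: the construction is engineered precisely so that the codegree of an $(r-1)$-tuple depends only on the underlying vertices in $G$ and not on which copies carry them, which is exactly what ``strictly balanced degree'' demands, and the pull-back of a $K_{r+1}^r$ is a short distinctness argument. The only care needed is bookkeeping — confirming that coincidences among the copies (two classes carrying the same vertex of $G$) cause no problem, since such tuples simply have codegree $0$ in both of the two missing classes, so balance is preserved.
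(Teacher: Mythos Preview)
Your proposal is correct and follows essentially the same route as the paper: both build the $(r+1)$-partite lift $H$ by placing one copy of $V(G)$ in each class and declaring a transversal $r$-tuple an edge iff the underlying vertices form an edge of $G$, then invoke Theorem~\ref{degth} via the balanced-codegree property. The only cosmetic difference is that the paper argues directly (density $>1-\tfrac1r$ forces $K_{r+1}^r\subset H$, hence in $G$) while you phrase it contrapositively; your treatment is in fact slightly more careful about the distinctness of the pulled-back vertices and the degenerate codegree-$0$ case.
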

\begin{proof}

	Given a labelled $r$-graph $G$ with vertex set $V$ we will form a new $(r+1)$-partite $r$-graph $H$. The vertex set of $H$ is the disjoint union of $r+1$ copies $V_1,\ldots, V_{r+1}$ of $V$.
	
	Given an edge $e=\{v_1,v_2,...,v_r\}$ of $G$,  a choice of $r$  of the classes $V_i$, and a permutation $\pi\in S_r$,  we let    $e_{\pi}=\{v_{\pi(1)},v_{\pi(2)}, \ldots, v_{\pi(r)}  )$, where the 
	$j$th vertex in the tuple is the vertex in the $j$th of the $r$ vertex classes,  be an edge in the $r$-partite subgraph of $H$ induced by those classes.   Note that  in this way each edge $e$ of 
	$G$  gives 	rise to $r!$ edges in each of the $r$-partite subgraphs of $H$. (So if we look at ordinary graphs and let $G$ be a single edge  (1,2)  then $H$ would become a 6-cycle which 
	winds twice through the three parts.)
		
	The $r$-graph $H$ has strictly balanced codegrees, since the number of neighbours in $V_i$, of a partite $(r-1)$-tuple $t$ in $H$, only depends on whether $t$ intersects $V_i$ or not, and the 
	number of neighbours of $t$ in $G$.
	
	By the assumptions each $\rho(i)$  in $H$ is strictly greater than $(1-\frac{1}{r})$, so $H$ contains a copy of $K_{r+1}^r$, and the corresponding vertices form a $K_{r+1}^r$ in $G$.
\end{proof}

In fact the proof of Theorem \ref{degth} can easily be modified to give a bound for the degree required to give a copy of $(K_{r+1}^r-k)$, the  $r$-graph obtained by deleting any $k$ edges from $K_{r+1}^r$, we just need to modify the first bound for $S(e)$ to be less than $r-k-1$. This gives 
\begin{theorem}\label{degth2}
	If the partite $(r-1)$-tuples of $G$ have strictly balanced degrees, $k\leq r-1$, and  $\max_j \sum_{i \neq j} \rho(i) > r-k-1$  then   $(K_{r+1}^r-k)\subset G$.
\end{theorem}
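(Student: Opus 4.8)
The plan is to re-run the proof of Theorem~\ref{degth} almost word for word; the only genuine change is in the combinatorial estimate that opens it. First I would pick an index $j$ attaining the maximum in the hypothesis; without loss of generality $j=1$, so that $\sum_{i=2}^{r+1}\rho(i)>r-k-1$, and suppose for contradiction that $G$ contains no copy of $(K_{r+1}^r-k)$.

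Next I would establish the new local bound on $S(e)$. Fix an edge $e\in E(P_1)$, so that $e$ spans $V_2,\dots,V_{r+1}$, and let $g_2,\dots,g_{r+1}$ be its $(r-1)$-subsets, $g_i$ being the one missing class $V_i$. For a vertex $v\in V_1$, the $r$-subsets of the partite $(r+1)$-set $\{v\}\cup e$ are precisely $e$ together with the $r$ sets $g_i\cup\{v\}$, so $\{v\}\cup e$ spans $1+\#\{\,i:v\in N(V_1,g_i)\,\}$ edges of $G$. If this were at least $r+1-k$ for some $v$, then $\{v\}\cup e$ would be a set of $r+1$ vertices carrying at least $r+1-k$ of its $r+1$ possible edges, hence would contain a copy of $(K_{r+1}^r-k)$ --- here one uses that $S_{r+1}$ acts on the edge set of $K_{r+1}^r$ (each edge being the complement of a vertex) exactly as on its vertex set, so that all $r$-graphs obtained by deleting $k$ edges from $K_{r+1}^r$ are isomorphic. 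As this is ruled out, every $v\in V_1$ lies in at most $r-k-1$ of the neighbourhoods $N(V_1,g_i)$; summing this over $v\in V_1$, the quantity $S(e)=\sum_{i=2}^{r+1}d(V_1,g_i)$ satisfies exactly the bound it satisfies in the proof of Theorem~\ref{degth}, with $r-1$ replaced by $r-k-1$.

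From here the argument is the proof of Theorem~\ref{degth} copied verbatim, with $r-1$ replaced throughout by $r-k-1$: sum $S(e)$ over $e\in E(P_1)$; rewrite the result as $\sum_{i=2}^{r+1}\sum_{g\in P_{\{1,i\}}}d(V_1,g)\,d(V_i,g)$; replace $d(V_i,g)$ by $d(V_1,g)$ using the strictly balanced degree of the $(r-1)$-tuple $g$, which meets neither $V_1$ nor $V_i$; apply Cauchy--Schwarz; substitute $\sum_{g\in P_{\{1,i\}}}d(V_1,g)=\rho(i)\prod_{k\neq i}|V_k|$; and cancel the common factor $\rho(1)\prod_{k\neq1}|V_k|$ (which is nonzero: since $j=1$ minimises $\rho(\cdot)$, strict balance would force every $\rho(i)$ to vanish if $\rho(1)$ did, contradicting the hypothesis). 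The outcome is $\sum_{i=2}^{r+1}\rho(i)\le r-k-1$, contradicting the choice $j=1$; hence $(K_{r+1}^r-k)\subset G$. Taking $k=0$ recovers Theorem~\ref{degth}.

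I do not anticipate a real obstacle: the content is simply that the pigeonhole step of Theorem~\ref{degth} (``some $v\in V_1$ completes \emph{all} $r$ of the $(r-1)$-tuples $g_i$, producing a $K_{r+1}^r$'') weakens transparently to ``some $v\in V_1$ completes \emph{all but~$k$} of them, producing a $(K_{r+1}^r-k)$'', which lowers the threshold by $k$. The one step deserving a line of care is the assertion that $r+1$ vertices spanning at least $r+1-k$ of their $r$-subsets already contain $(K_{r+1}^r-k)$, which is where the edge-transitivity of $K_{r+1}^r$ --- and hence the uniqueness up to isomorphism of $K_{r+1}^r-k$ --- is used.
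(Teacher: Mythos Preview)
Your proposal is correct and follows exactly the approach the paper indicates: the paper's entire proof of Theorem~\ref{degth2} is the remark that one re-runs the proof of Theorem~\ref{degth} with the first bound on $S(e)$ lowered from $r-1$ to $r-k-1$, which is precisely what you do. You even supply two details the paper leaves implicit---that $K_{r+1}^r-k$ is well defined because $S_{r+1}$ acts transitively on the edges of $K_{r+1}^r$, and that the divisor $\rho(1)\prod_{l\neq1}|V_l|$ is nonzero because strict balance propagates a vanishing density to all parts---so nothing is missing. (One cosmetic point: you reuse the letter $k$ as a product index in $\prod_{k\neq i}|V_k|$, clashing with the $k$ in the statement; the paper has the same clash, but you may want to rename the index.)
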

For $k\geq r-1$ the graph $(K_{r+1}^r-k)$  is in fact $r$-partite, and the strictly balanced degree condition means that if  $G$ has at least one edge then any edge will be part of a $(K_{r+1}^r-(r-1))$, which is simply two edges overlapping on an $(r-1)$-tuple.

Using this theorem as in the proofs of Corollaries \ref{co1} and \ref{dc} we get
\begin{corollary}
	If all partite $(r-1)$-tuples of $G$ have balanced degree, $k\leq (r-1)$, and  $\rho(i)> 1-\frac{k+1}{r}$ for all $i$ then  $(K_{r+1}^r-k)\subset G$.
\end{corollary}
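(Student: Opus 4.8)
The plan is to deduce this corollary directly from Theorem~\ref{degth2}, by exactly the averaging argument used to pass from Theorem~\ref{degth} to Corollaries~\ref{co1} and~\ref{dc}. The point is simply that ``all densities large'' forces the single inequality that Theorem~\ref{degth2} needs.

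First I would observe that, since $G$ is $(r+1)$-partite, for any fixed index $j$ the set $\{i : i \neq j\}$ has exactly $r$ elements. Hence, if $\rho(i) > 1 - \frac{k+1}{r}$ holds for every $i$, then
$$\sum_{i \neq j} \rho(i) \;>\; r\left(1 - \frac{k+1}{r}\right) \;=\; r - k - 1,$$
and in particular $\max_j \sum_{i \neq j} \rho(i) > r - k - 1$. I would also note that the hypothesis $k \leq r-1$ is exactly the range in which Theorem~\ref{degth2} is stated, and that it makes $1 - \frac{k+1}{r} = \frac{r-k-1}{r} \geq 0$, so the density condition is a genuine restriction rather than a vacuous one. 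Finally, ``balanced degree'' here is the strictly balanced degree condition on partite $(r-1)$-tuples that Theorem~\ref{degth2} requires, so both hypotheses of that theorem are met and we conclude $(K_{r+1}^r - k) \subset G$.

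I expect no real obstacle: the entire content sits in Theorem~\ref{degth2}, and this corollary only records the symmetric threshold $1 - \frac{k+1}{r}$ that it implies when all the $\rho(i)$ coincide. The genuinely hard direction — producing an extremal $G$ with all densities equal to $1 - \frac{k+1}{r}$ and no copy of $K_{r+1}^r - k$, which would show the bound is best possible — is not attempted here, and as remarked after Theorem~\ref{degth} no such matching construction is known for $r \geq 3$.
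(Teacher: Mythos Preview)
Your proposal is correct and matches the paper's intended approach exactly: the paper states that this corollary follows from Theorem~\ref{degth2} ``as in the proofs of Corollaries~\ref{co1} and~\ref{dc}'', and the averaging step $\sum_{i\neq j}\rho(i) > r\bigl(1-\tfrac{k+1}{r}\bigr)=r-k-1$ you wrote out is precisely that argument.
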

\begin{corollary}
	If $G$ is an $r$-graph, $k\leq(r-1)$, and  $|E(G)|> (1-\frac{k+1}{r}) \frac{n^r}{r!}$ then $(K_{r+1}^r-k)\subset G$.
 \end{corollary}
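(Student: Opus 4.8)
The plan is to follow the proof of Corollary~\ref{dc} (de Caen's bound) almost verbatim, replacing the appeal to the density/degree theorem for $K_{r+1}^r$ by the corresponding statement for $K_{r+1}^r-k$, namely Theorem~\ref{degth2}. Starting from a labelled $r$-graph $G$ on a vertex set $V$ with $|V|=n$, I would build the $(r+1)$-partite $r$-graph $H$ on the disjoint union of $r+1$ copies $V_1,\dots,V_{r+1}$ of $V$ exactly as in that proof: for every edge $e=\{x_1,\dots,x_r\}$ of $G$, every choice of $r$ of the $r+1$ classes, and every $\pi\in S_r$, put the copy of $x_{\pi(j)}$ into the $j$-th of the chosen classes and declare the resulting $r$-tuple an edge of $H$. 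As observed in the proof of Corollary~\ref{dc}, $H$ has strictly balanced codegrees, because for a partite $(r-1)$-tuple $t$ of $H$ the degree $d(V_i,t)$ in a class $V_i$ not meeting $t$ equals the codegree in $G$ of the projection of $t$, and in particular does not depend on $i$.

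Next I would record the densities of $H$. Each edge $e$ of $G$ gives rise to exactly $r!$ edges in each of the $r+1$ induced $r$-partite subgraphs $P_i$ (one for each ordering of the vertices of $e$ on the $r$ classes of $P_i$), and these are all distinct because the edge of $G$ and the ordering can be read back off from an edge of $H$; hence $|E(P_i)|=r!\,|E(G)|$ while the complete $r$-partite graph on the relevant classes has $n^r$ edges, so $\rho(i)=r!\,|E(G)|/n^r$ for every $i$. The hypothesis $|E(G)|>\bigl(1-\tfrac{k+1}{r}\bigr)\tfrac{n^r}{r!}$ then yields $\rho(i)>1-\tfrac{k+1}{r}$ for all $i$, and since all densities are equal, $\max_j\sum_{i\neq j}\rho(i)=r\,\rho(1)>r\bigl(1-\tfrac{k+1}{r}\bigr)=r-k-1$. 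Since $k\le r-1$ and $H$ has strictly balanced codegrees, Theorem~\ref{degth2} applied to $H$ gives a copy of $K_{r+1}^r-k$ in $H$.

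Finally I would project this copy back to $G$. The copy in $H$ uses one vertex from each class $V_i$, say the copy of $v_i\in V$, and $r+1-k$ of the $\binom{r+1}{r}$ possible $r$-subsets are edges of $H$; each of these projects to the corresponding $r$-subset of $\{v_1,\dots,v_{r+1}\}$, which is then an edge of $G$. What remains is to see that $v_1,\dots,v_{r+1}$ are pairwise distinct, so that we obtain a genuine copy of $K_{r+1}^r-k$ in $G$; I expect this to be the only point requiring real care, the rest being the routine bookkeeping of de Caen's argument. When $k\le r-2$ this is automatic: at least $r+1-k\ge 3$ of the surviving edges of $H$ are present, each omitting exactly one class, so for any two indices $i\ne j$ there is a surviving edge omitting neither, and the fact that the vertices of the corresponding edge of $G$ are distinct forces $v_i\ne v_j$. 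The boundary case $k=r-1$, where only two edges overlapping on an $(r-1)$-tuple survive, is the delicate one and is covered by the degenerate observation following Theorem~\ref{degth2}.
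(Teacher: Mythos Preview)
Your approach is precisely the paper's: reproduce the $(r+1)$-partite blow-up $H$ from the proof of Corollary~\ref{dc} and invoke Theorem~\ref{degth2} in place of Theorem~\ref{degth}; the paper's own ``proof'' is nothing more than a pointer to that argument. Your density computation and the balanced-codegree check are correct, and you are right to raise the distinctness of the projected vertices $v_1,\dots,v_{r+1}$, a point the paper glosses over even for Corollary~\ref{dc}. Your pigeonhole argument for $k\le r-2$ is valid: with at least three surviving edges, every pair of indices lies in some surviving edge, forcing the corresponding projections to differ.

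The gap is your handling of $k=r-1$. The observation after Theorem~\ref{degth2} yields a copy of $K_{r+1}^r-(r-1)$ in $H$, but it says nothing about the projection to $G$. Concretely, if $G$ consists of a single edge $e$, then every partite $(r-1)$-tuple of $H$ with nonzero degree projects to an $(r-1)$-subset of $e$ and has exactly one neighbour in each of its two missing classes, namely the copy of the unique vertex completing $e$; the two edges of the resulting configuration in $H$ therefore project to the same edge $e$, and the two ``extra'' vertices $v_a,v_b$ coincide. Since $|E(G)|=1>0=(1-\tfrac{r}{r})\tfrac{n^r}{r!}$ while $G$ certainly contains no two edges sharing an $(r-1)$-set, the corollary as stated is actually false at $k=r-1$. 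This is a defect of the statement rather than of your method; your argument, like the paper's, is complete and correct for $k\le r-2$.
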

The case $r=3$ of the latter result was proven by de Caen in \cite{dC}.

%------------------------------------------------------------------------------------------------------------------------------------------------------------------------------------------------
\section{Open problems}
Following \cite{BSTT} and the results in this paper concerning the codegree it is natural to ask what happens for general degrees in $r$-graphs with $r>2$. 
The first open case is vertex degrees for $3$-graphs
\begin{question}
	Let $G$ be a 4-partite  $3$-graph with balanced vertex degrees. Which densities force a $K_{4}^3$ in $G$?
\end{question}

For $r>3$ the corresponding question is open for balanced $l$-degrees for all $l<r-1$. 
\begin{question}
	Let $G$ be an $r$-partite  $r$-graph with $4\leq r$. Which densities forces a $K_{r+1}$ if the partite $l$-tuples have balanced degrees, where $1\leq l<r-1$.
\end{question}

Let $G$ be a $t$-partite $r$-graph with all densities at least $\alpha$. Given an $r$-graph $H$, we can ask for the minimum density $\alpha$ which forces a copy 
of $H$ in $G$.  We call this density $d_t(H)$.
\begin{question}
	Does there exist a finite $t_0$ such that   $d_{t}(H)=d_{t_0}(H)$ for  $t\geq t_0$?
\end{question}
For $r=2$ and $H=K_t$ the answer is yes by \cite{pf}.

%------------------------------------------------------------------------------------------------------------------------------------------------------------------------------------------------
\section*{Acknowledgement}
This research was done while the authors were attending the research semester Graphs, hypergraphs and computing at  
Institut Mittag-Leffler (Djursholm, Sweden).  The first author was supported by  The Swedish Research Council grant 2014--4897.
The second author was supported by ERC Advanced Grant GRACOL.

%------------------------------------------------------------------------------------------------------------------------------------------------------------------------------------------------
\providecommand{\bysame}{\leavevmode\hbox to3em{\hrulefill}\thinspace}
\providecommand{\MR}{\relax\ifhmode\unskip\space\fi MR }
% \MRhref is called by the amsart/book/proc definition of \MR.
\providecommand{\MRhref}[2]{%
  \href{http://www.ams.org/mathscinet-getitem?mr=#1}{#2}
}
\providecommand{\href}[2]{#2}

\end{document}